\newcommand{\R}{\mathbb R}
\newcommand{\abs}[1]{\left\vert#1\right\vert}
\newcommand{\norm}[1]{\left\lVert#1\right\rVert}
\newcommand{\norb}[1]{\big\lVert#1\big\rVert}
\newcommand{\set}[1]{\left\{#1\right\}}
\newcommand{\ex}[1]{\mathsf{E}\left[#1\right]}
\newcommand{\ind}[1]{\mathbf{1}_{#1}}
\newcommand{\wt}{\widetilde}
\begin{document}
\title{Integrability of solutions to mixed stochastic differential equations}
\author{Georgiy Shevchenko}

\theoremstyle{plain}
\newtheorem{theorem}{Theorem} 
\newtheorem{lemma}[theorem]{Lemma} 
\newtheorem{proposition}[section]{Proposition} 
\newtheorem{corollary}[section]{Corollary}
\newtheorem{definition}[section]{Definition}
\theoremstyle{definition}
\newtheorem{example}[section]{Example}
\newtheorem{remark}[section]{Remark}
\newcommand{\keywords}{\textbf{Keywors. }}
\newcommand{\subjclass}{\textbf{MSC 2010. }}
\renewcommand{\abstract}{\textbf{Abstract} }

\maketitle

\begin{abstract}
We prove that the standard conditions that provide unique solvability  of a mixed stochastic differential equations also guarantee that its solution possesses finite moments. We also present conditions supplying existence of exponential moments. For a special equation whose coefficients do not satisfy the linear growth condition, we find conditions for integrability of its solution.

\end{abstract}

\keywords{Mixed stochastic differential equation, moment of solution, exponential moment of solution}

\subjclass{60H10, 60G22}

\section*{Introduction}

The main object of this article is a stochastic differential equation of the form 
 \begin{equation}\label{mainsde}
 X_t =X_0 +\int_0^t a(s,X_s)ds +
\int_0^tb(s,X_s)dW_s+\int_0^t c(s,X_s) dZ_s,\quad t\in[0,T].
 \end{equation}
The randomness in this equation comes from two processes: a standard Wiener process  $W$ and a process $Z$ whose paths are H\"older continuous of order greater than $1/2$. In place of the process $Z$, usually a fractional Brownian motion $B^H$ with the Hurst parameter $H>1/2$ is taken. Due to such twofold nature of the randomness, equation \eqref{mainsde} is called a mixed stochastic differential equation. 

Existence and uniqueness of solution to a mixed stochastic diffential equation \eqref{mainsde} were proved under different conditions in \cite{guerra-nualart,kubilius,bookmyus,mbfbm-sde,mbfbm-limit}.
More generally, mixed equations with jumps were considered in \cite{mbfbm-jumps} and mixed delay equations, in \cite{mbfbm-delay}.

The principal aim of this article is to prove existence of moments  of a solution to \eqref{mainsde}. In  \cite{mbfbm-limit,mbfbm-jumps,mbfbm-delay}, the existence of moments was proved under an additional assumption of boundedness of the coefficient $b$. In \cite{mbfbm-malliavin}, the exponential integrability of solutions was established under the assumption that all coefficients of \eqref{mainsde} are bounded and certain other assumptions. In this paper we will generalize those results. Namely, we will show the existence of moments without any assumptions except those providing the unique solvability and cretain exponential integrability of the driver $Z$. Under additional assumption that the coefficients are bounded we show the exponential integrability of the solution to \eqref{mainsde}. We also consider an equation with coefficients not satisfying the linear growth condition and prove that all moments of its solution are finite. 

The paper is organized as follows.  In Section 1, we introduce the main object and provide necessary information on the pathwise (Young) integral. In Section 2 we show the usual and exponential integrability of the solution to \eqref{mainsde}. In Section 3 we prove existence of moment for a more general equation, whose coefficients do not satisfy the linear growth conditions.

\section{Preliminaries}\label{sec:prelim}

Let $\bigl(\Omega, \mathcal{F}, \mathbb{F} = \{\mathcal{F}_t, t\ge 0\},
\mathsf{P}\bigr)$ be a complete filtered probability space.  
We will use the following notation. The symbol $\abs{\cdot}$ will denote the absolute value of a real number, the Euclidean norm of a vector, and the (Euclidean) operator norm of a matrix. We will use the symbol $C$ to denote any constant whose value is not important and may vary from one line to another; should this constant depend on certain parameters, we will put them into subscripts. If the value of a constant is important, we will use the symbol $K$ for it. 

Now we proceed to a precise definition of the main object. It is the following stochastic differential equation in $\R^d$:
\begin{equation}\label{mainsde1}
X_t = X_0 + \int_{0}^{t} a(s,X_s)ds + \sum_{i=1}^{m} \int_0^t b_i(s,X_s)dW^i_s + \sum_{j=1}^{l}\int_0^t c_j(s,X_s)dZ^j_s,\quad t\in[0,T],
\end{equation}
where the coefficients $a\colon [0,T]\times \R^d\to\R^d$, $b_i\colon [0,T]\times \R^d\to\R^d$, $i=1,\dots,m$, $c_j\colon  [0,T]\times \R^d\to\R^d$, $j=1,\dots,l$, are jointly continuous; $W=\set{W_t=\left(W_t^1,\dots,W_t^m\right),t\in[0,T]}$ is a standard Wiener process in $\R^m$, $Z = \set{Z_t\left(Z_t^1,\dots,Z_t^l\right),t\in[0,T]}$ is an $\mathbb{F}$-adapted process in $\R^l$, whose paths are H\"older continuous of order $\mu>1/2$; the initial condition  $X_0$ is non-random. In what follows we will use the short form \eqref{mainsde} to write equation \eqref{mainsde1} and the integrals involved. 

In \eqref{mainsde}, the integral w.r.t.\ the Wiener process $W$ is understood as the It\^o integral, while that w.r.t.\ the process  $Z$, as the pathwise Young integral. We will give only basics on it; further information may be found e.g.\ in 
\cite{friz-victoir}. 

Let functions $g,h\colon [a,b]\to \R$ be $\alpha$- and $\beta$-H\"older continuous correspondingly, with $\alpha+\beta>1$. Then the integral $\int_a^b g(x) dh(x)$ is well defined as a limit of integral sums. Moreover, one has an estimate (the Young--Love inequality)
\begin{equation}
\left|\int_{a}^b g(s) dh(s)\right|\leq C_{\alpha,\beta} \norm{h}_{a,b,\beta}\big(\norm{g}_{a,b,\infty}+\norm{g}_{a,b,\alpha}(b-a)^{\alpha}\big)(b-a)^{\beta},
\label{integrestim}
\end{equation}
where $\norm{f}_{a,b,\infty}=\sup_{x\in [a,b]}\abs{f(x)}\ \text{ and }\ \norm{f}_{a,b,\gamma}=\sup_{a\leq s<t\leq b}\frac{|f(t)-f(s)|}{|t-s|^{\gamma}}
$
are the supremum norm and a $\gamma$-H\"older seminorm on $[a,b]$, respectively.

The following assumptions guarantee that equation \eqref{mainsde} has a unique solution, see \cite{mbfbm-delay}:
\begin{enumerate}[{A}1.]
\item
For all $t\in[0,T]$, $x\in \R^d$,
\begin{gather*}
\abs{a(t,x)} + \abs{b(t,x)}+ \abs{c(t,x)}\leq C(1+\abs{x}).
\end{gather*}
\item The function $c$ is differentiable in the second variable, moreover, the derivative is bounded: for all $t\in[0,T]$, $x\in \R^d$,
$$\abs{c'_x(t,x)}\le C.$$

\item For all $R>0$, $t\in[0,T]$ and $x_1,x_2\in\R^d$ such that $\abs{x_1}\le R$,  $\abs{x_2}\le R$, 
$$\abs{a(t,x_1)-a(t,x_2)}+\abs{b(t,x_1)-b(t,x_2)}+\abs{c'_x(t,x_1)-c'_x(t,x_2)}\leq C_R\abs{x_1-x_2}.$$
\item For some  $\beta\in(1-\mu,1/2)$ and any $s,t\in[0,T]$, $x\in\R^d$
$$\abs{c(t,x)-c(s,x)}\le C\abs{t-s}^\beta(1+\abs{x}), \quad\abs{c'_x(s,x)-c'_x(t,x)}\leq C|s-t|^\beta.$$
\end{enumerate}
Such formulation of the condition A4 is needed in order to be able to consider linear equations. 

\section{Integrability of solution}
In this section we prove integrability of the solution to equation \eqref{mainsde}. We use techniques similar to those used in \cite{hu-nualart,mbfbm-malliavin}. 
\begin{theorem}\label{thm-integr}
Assume that {\rm A1--A4} hold and
$$
\ex{\exp\set{c \norm{Z}^{1/\mu}_{0,T,\mu}}}<\infty.
$$
Then for any  $p>0$ the solution $X$ to equation \eqref{mainsde} satisfies
$$
\ex{\norm{X}_{0,T,\infty}^p}<\infty.
$$
\end{theorem}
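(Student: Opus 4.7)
The plan is to combine a pathwise exponential bound for $X$, derived from a partition argument for the Young integral, with moment estimates for the It\^o component obtained via BDG. Fix an auxiliary H\"older exponent $\alpha\in(1-\mu,\,1/2)$, which is a non-empty range precisely because $\mu>1/2$, and denote the martingale part by $M_t=\sum_{i=1}^{m}\int_0^t b_i(s,X_s)\,dW^i_s$. The target is a pathwise estimate of the form
\[
\norm{X}_{0,T,\infty}\le K_1\exp\!\bigl(K_2\norm{Z}_{0,T,\mu}^{1/\mu}\bigr)\bigl(1+\abs{X_0}+\norm{M}_{0,T,\alpha}\bigr),
\]
with deterministic $K_1,K_2$ depending only on the data in A1--A4 and on $T$. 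Given such a bound, the stated conclusion will follow by combining the exponential integrability hypothesis on $\norm{Z}_{0,T,\mu}^{1/\mu}$ with standard moment estimates for $\norm{M}_{0,T,\alpha}$.

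For the pathwise estimate, I would partition $[0,T]$ into subintervals $I_k=[t_{k-1},t_k]$ of uniform mesh $\Delta=\min(\Delta_0,\,c_0\norm{Z}_{0,T,\mu}^{-1/\mu})$, with $\Delta_0\le T$ and $c_0$ small fixed constants chosen so that subsequent absorption is possible. On each $I_k$ I would apply the Young--Love inequality \eqref{integrestim} to $\int_s^t c(u,X_u)\,dZ_u$ for $s<t\in I_k$. Linear growth A1 gives $\norm{c(\cdot,X_\cdot)}_{I_k,\infty}\le C(1+\norm{X}_{I_k,\infty})$, and A2 together with the time-H\"older estimate in A4 yields $\norm{c(\cdot,X_\cdot)}_{I_k,\alpha}\le C(1+\norm{X}_{I_k,\infty}+\norm{X}_{I_k,\alpha})$. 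Together with the trivial drift estimate and the martingale increment $\norm{M}_{I_k,\alpha}$, this produces
\[
\norm{X}_{I_k,\alpha}\le C(1+\norm{X}_{I_k,\infty})\bigl(\Delta^{1-\alpha}+\norm{Z}_{0,T,\mu}\Delta^{\mu-\alpha}\bigr)+\norm{M}_{I_k,\alpha}+C\norm{Z}_{0,T,\mu}\Delta^{\mu}\norm{X}_{I_k,\alpha},
\]
and the choice of $\Delta$ makes the last term absorbable. Substituting the resulting bound into $\norm{X}_{I_k,\infty}\le\abs{X_{t_{k-1}}}+\Delta^\alpha\norm{X}_{I_k,\alpha}$ and rearranging yields a recursion $\norm{X}_{I_k,\infty}\le\lambda\abs{X_{t_{k-1}}}+\lambda R_k$ with a fixed $\lambda>1$ and $R_k$ linear in $\norm{M}_{I_k,\alpha}$. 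Iterating and using $N\le T/\Delta\le C(1+\norm{Z}_{0,T,\mu}^{1/\mu})$, so that $\lambda^N\le C\exp(C\norm{Z}_{0,T,\mu}^{1/\mu})$, delivers the pathwise bound.

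To pass to moments I would localize by the stopping time $\tau_R=\inf\set{t\le T:\abs{X_t}\ge R}\wedge T$ and apply the pathwise bound to $X^{\tau_R}$. Since $\norm{X^{\tau_R}}_\infty\le R$, A1 gives $\abs{b_i(s,X^{\tau_R}_s)}\le C(1+R)$ on $[0,\tau_R]$, so BDG combined with Kolmogorov's continuity criterion gives finite moments of every order for $\norm{M^{\tau_R}}_{0,T,\alpha}$. Taking $p$-th moments and applying H\"older's inequality with an exponent $r>1$ separates the right-hand side into the exponential factor $\ex{\exp(prK_2\norm{Z}_{0,T,\mu}^{1/\mu})}^{1/r}$, finite by the hypothesis with the constant chosen large enough, and a polynomial moment of $1+\norm{M^{\tau_R}}_\alpha$. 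Keeping track of the $R$-dependence on both sides, a careful closing of the resulting inequality (exploiting the $R$-boundedness of $X^{\tau_R}$ and the $R$-independence of the exponential factor) yields a bound on $\ex{\norm{X^{\tau_R}}_{0,T,\infty}^p}$; since $\tau_R\to T$ almost surely, Fatou's lemma (or monotone convergence for $\norm{X^{\tau_R}}_\infty^p\uparrow\norm{X}_\infty^p$) transfers the bound to $X$.

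The principal obstacle is closing the moment estimate: the pathwise bound features $\norm{M}_\alpha$, whose moments are themselves controlled by BDG in terms of moments of $X$ of comparable order, producing a self-referential inequality. The stopping-time localization breaks the circularity by making $X^{\tau_R}$ bounded, but then one must ensure that the final bound does not blow up as $R\to\infty$; this requires a careful balance between the $R$-independent exponential factor and the $R$-dependent BDG term, possibly via Young-type inequalities on products. A secondary technical point is the choice of the auxiliary exponent $\alpha\in(1-\mu,\,1/2)$: the upper bound $1/2$ is needed for the finite-moment H\"older regularity of the It\^o integral, while the lower bound $1-\mu$ is required for the Young--Love inequality, and the range is non-empty precisely by the standing assumption $\mu>1/2$.
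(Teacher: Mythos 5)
Your overall toolkit is the right one (Young--Love on subintervals of length $\Delta\sim\norm{Z}_{0,T,\mu}^{-1/\mu}$, absorption, a stopping-time localization, Fatou), but the order in which you deploy it creates a gap that your proposal flags without resolving. You first prove a \emph{global} pathwise bound $\norm{X}_{0,T,\infty}\le K_1\exp(K_2\norm{Z}_{0,T,\mu}^{1/\mu})(1+\abs{X_0}+\norm{M}_{0,T,\alpha})$ and only then take moments, separating the two random factors by H\"older with exponents $r,r'$. But under A1 the integrand $b(s,X_s)$ has linear growth, so BDG/Garsia--Rodemich--Rumsey gives $\ex{\norm{M^{\tau_R}}_{0,T,\alpha}^{pr'}}\le C(1+\ex{\norm{X^{\tau_R}}_{0,T,\infty}^{pr'}})$, and your inequality becomes $\ex{\norm{X^{\tau_R}}^p_\infty}\le C\,(1+\ex{\norm{X^{\tau_R}}_\infty^{pr'}})^{1/r'}$ with $r'>1$ and a constant $C\ge 1$. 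Since $(\ex{Y^{pr'}})^{1/r'}\ge\ex{Y^p}$, this is vacuous: there is no small factor to absorb the self-referential term, and nothing prevents the right-hand side from blowing up with $R$. Young's inequality on the product only changes the exponents, not the structure of the problem. So the "careful closing" you defer to is precisely the missing step, and it cannot be done along the route you describe.

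The paper closes the loop by changing two things. First, it localizes in the driver as well: the stopping time caps $\norm{Z}_{0,t,\mu}$ at $N$, so the would-be random factor $\exp(C\norm{Z}^{1/\mu})$ becomes the deterministic constant $\exp(CN^{1/\mu})$ and no H\"older separation against the martingale term is ever needed. Second, it takes $p$-th moments \emph{subinterval by subinterval}: on an interval of length $\Delta$ the GRR estimate yields $\ex{\norb{I^b}_{s,t,\theta}^p}\le C_p(1+\ex{\norm{X^{N,R}}_{s,t,\infty}^p})\Delta^{p/2-p\theta}$ --- the \emph{same} order $p$ on both sides, with a factor $\Delta^{p/2}$ that makes the term absorbable once $\Delta\le(2K_p)^{-2/p}$. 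Iterating over $[T/\Delta]+1$ intervals gives $\ex{\norm{X^{N,R}}_{0,T,\infty}^p}\le C_p\exp\set{C_pN^{1/\mu}}$ uniformly in $R$; Fatou removes $R$, and a final Cauchy--Schwarz plus layer decomposition over $\set{\norm{Z}_{0,T,\mu}\in[n-1,n)}$ converts the family of $N$-indexed bounds into finiteness of $\ex{\norm{X}_{0,T,\infty}^p}$ using the exponential moment hypothesis. You would need to restructure your argument along these lines; as written, the proposal does not constitute a proof.
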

\begin{proof}
For  $N \ge 1$, $R\ge 1$, denote $\tau_{N,R} = \min\set{t\ge 0: \norm{X}_{0,t,\infty}\ge R\text{ or } \norm{Z}_{0,t,\mu}\ge N}$, $X^{N,R}_t = X_{t\wedge \tau_{N,R}}$, $\ind{t} = \ind{\set{t\le \tau_{N,R}}}$. Put also
$I^a_t = \int_0^t a(s,X^{N,R}_s)\ind{s}ds$, $I^b_t = \int_0^t b(s,X^{N,R}_s)\ind{s}dW_s$, $I^c_t = \int_0^t c(s,X^{N,R}_s)\ind{s}dZ_s$. Fix arbitrary  $\theta\in(1-\mu, \beta]$.

Let $0\le s\le u\le v\le t\le T$. Write
\begin{align*}
\abs{X^{N,R}_v-X^{N,R}_u}&\le \abs{I^a_v-I^a_u} +\big|{I^b_v-I^b_u}\big|+ \abs{I^c_v-I^c_u}. \end{align*}
Estimate first
$$
\abs{I^a_v-I^a_u} \le \int_u^v \abs{a(z,X^{N,R}_z)} dz \le C\int_u^v \left(1+\abs{X^{N,R}_z}\right) dz \le C\left(1+\norm{X^{N,R}}_{s,t,\infty}\right)(v-u).
$$
Further, using \eqref{integrestim}, we have
\begin{align*} \abs{I^c_v-I^c_u} \le C N  \left(
\norm{c(\cdot,X^{N,R}_\cdot)}_{u,v,\infty} + 
\norm{c(\cdot,X^{N,R}_\cdot)}_{u,v,\theta}(v-u)^\theta\right)(v-u)^\mu
\end{align*}
It follows from assumption A1 that  $$\norm{c(\cdot,X^{N,R}_\cdot)}_{u,v,\infty}\le C \left(1+\norm{X^{N,R}}_{u,v,\infty}\right)\le C \left(1+\norm{X^{N,R}}_{s,t,\infty}\right).$$ 
Since
\begin{align*}
\abs{c(x,X^{N,R}_x)-c(y,X^{N,R}_y)}&\le \abs{c(x,X^{N,R}_x)-c(y,X^{N,R}_x)} + \abs{c(y,X^{N,R}_x)-c(y,X^{N,R}_y)} \\ &\le C\left(\abs{x-y}^\beta(1+\abs{X^{N,R}_x}) + \abs{X^{N,R}_x - X^{N,R}_y} \right),
\end{align*}
then
$$
\norm{c(\cdot,X^{N,R}_\cdot)}_{u,v,\theta}\le C\left((v-u)^{\beta-\theta}\big(1+\norm{X^{N,R}}_{u,v,\infty}\big) + \norm{X^{N,R}}_{u,v,\theta}\right).
$$
Therefore, 
\begin{align*}
\abs{I^c_v-I^c_u}
\le CN \left(1+\norm{X^{N,R}}_{s,t,\infty} + \norm{X^{N,R}}_{s,t,\theta} (v-u)^\theta \right)(v-u)^\mu.
\end{align*}
Consequently, 
\begin{align*}
\norm{X^{N,R}}_{s,t,\theta}&\le C\left(1+\norm{X^{N,R}}_{s,t,\infty}\right)(t-s)^{1-\theta}
+ \norb{I^b}_{s,t,\theta}\\
&\qquad  + CN  \left(1+\norm{X^{N,R}}_{s,t,\infty}(t-s)^{\mu-\theta} + \norm{X^{N,R}}_{s,t,\theta} (t-s)^\mu \right)\\
&\le \norb{I^b}_{s,t,\theta} + K N\left(1+\norm{X^{N,R}}_{s,t,\infty}(t-s)^{\mu-\theta} + \norm{X^{N,R}}_{s,t,\theta} (t-s)^\mu \right)
\end{align*}
with certain non-random constant $K $. 

Suppose that $t-s\le\Delta$ with $\Delta \le (2K  N)^{-1/\mu}$. Then
\begin{equation}\label{xnr}
\norm{X^{N,R}}_{s,t,\theta}\le 2\norb{I^b}_{s,t,\theta} + 2K N\left(1+\norm{X^{N,R}}_{s,t,\infty}(t-s)^{\mu-\theta} \right).
\end{equation}
Further, prom the obvious inequality
$$\norm{X^{N,R}}_{s,t,\infty}\le \abs{X_s} + \norm{X^{N,R}}_{s,t,\theta} (t-s)^\theta,
$$
using \eqref{xnr}, we obtain
\begin{align*}
\norm{X^{N,R}}_{s,t,\infty}&\le \norm{X^{N,R}}_{0,s,\infty} +  2\left(\norb{I^b}_{s,t,\theta} + K N\right)(t-s)^\theta+2KN\norm{X^{N,R}}_{s,t,\infty} (t-s)^\mu\\
&\le \norm{X^{N,R}}_{0,s,\infty} +  2\left(\norb{I^b}_{s,t,\theta} + K N\right)\Delta^\theta+2KN\norm{X^{N,R}}_{s,t,\infty} \Delta^\mu,
\end{align*}
whenever $t-s\le \Delta$. Assuming further that $\Delta\le (4K  N)^{-1/\mu}$, we get
$$
\norm{X^{N,R}}_{s,t,\infty}\le 2\norm{X^{N,R}}_{0,s,\infty} +  4\left(\norb{I^b}_{s,t,\theta}+KN\right)\Delta^\theta.
$$
Hence we derive for any $p>(1/2-\theta)^{-1}$ that
\begin{equation}\label{exxnrp}
\ex{\norm{X^{N,R}}^p_{0,t,\infty}}\le C_p \left(\ex{\norm{X^{N,R}}_{0,s,\infty}^p} + \ex{\norb{I_b}_{s,t,\theta}^p}\Delta^{p\theta}+N^p\Delta^{p\theta}\right).
\end{equation}
Using the Garsia--Rodemich--Rumsey inequality, we have
\begin{align*}
\ex{\norb{I_b}_{s,t,\theta}^p}&\le C_{p} \int_s^t\int_s^t \frac{\ex{\abs{I^b(x)-I^b(y)}^p}}{\abs{x-y}^{p\theta +2}}dx\,dy\\
& \le C_{p} \int_s^t\int_s^t \ex{\abs{\int_x^y \abs{b(z,X^{N,R}_z)}^2\ind{z} dz}^{p/2}}{\abs{x-y}^{-p\theta -2}}dx\,dy\\
& \le C_{p} \int_s^t\int_s^t \int_x^y \left(1+\ex{\abs{X^{N,R}_z}^{p}}\right) dz \abs{x-y}^{p/2-p\theta -3}dx\,dy\\
&\le C_p\left(1+ \ex{\norm{X^{N,R}}_{s,t,\infty}^p}\right)\int_s^t \int_s^t \abs{x-y}^{p/2-p\theta} dx\,dy\\
&\le  C_{p} \left(1+ \ex{\norm{X^{N,R}}_{s,t,\infty}^p}\right)\Delta^{p/2-p\theta}.
\end{align*}
Plugging this estimate into \eqref{exxnrp}, we arrive at the inequality
\begin{align*}
\ex{\norm{X^{N,R}}_{0,t,\infty}^p}&\le K_{p} \left(\ex{\norm{X^{N,R}}_{0,s,\infty}^p} + \ex{\norm{X^{N,R}}_{s,t,\infty}^p}\Delta^{p/2}+N^p\Delta^{p\theta}\right)\\
&\le K_p\left(\ex{\norm{X^{N,R}}_{0,s,\infty}^p} + \ex{\norm{X^{N,R}}_{0,t,\infty}^p}\Delta^{p/2}+N^p\right)
\end{align*}
with certain constant $K_{p}$. Assuming that $\Delta\le (2K_{p})^{-2/p}$, we get
\begin{equation}\label{exnrp}
\ex{\norm{X^{N,R}}_{0,t,\infty}^p} \le 2K_{p} \left(\ex{\norm{X^{N,R}}_{0,s,\infty}^p} +N^p\right).
\end{equation}
Finally, put $\Delta = \min\set{(4KN)^{-1/\mu},(2K_{p})^{-2/p}}$. Splitting the segment $[0,T]$ into $[T/\Delta]+1$ parts of length at most $\Delta$, we obtain from the estimate \eqref{exnrp} that
\begin{equation*}
\ex{\norm{X^{N,R}}_{0,T,\infty}^p} \le (2K_{p}+1)^{T/\Delta + 1} \left(\abs{X_0}^p +N^p\right)\le 
C_p\exp\set{C_p N^{1/\mu}}.
\end{equation*}
Letting  $R\to\infty$ and using the Fatou lemma, we get
$$
\ex{\norm{X}_{0,T,\infty}^p\ind{\norm{Z}_{0,T,\mu}\le N}} \le 
K'_p\exp\set{K'_p N^{1/\mu}}
$$
with some constant $K'_p$.
Denote $\xi = \norm{X}_{0,T,\infty}^p$, $\eta = \norm{Z}_{0,T,\mu}$ and write
\begin{align*}
\left(\ex{\xi^p}\right)^2 &\le \ex{\exp\set{2K'_{2p}\eta^{1/\mu}}}\ex{\xi^{2p}\exp\set{-2K'_{2p} \eta^{1/\mu}}}\\&\le C_p \sum_{n=1}^\infty \ex{\xi^{2p}\exp\set{-2K'_{2p} \eta^{1/\mu}}\ind{\eta\in[n-1,n)}}\\
&\le C_p\sum_{n=1}^{\infty}\exp\set{-2K_{2p}'(n-1)^{1/\mu}} \ex{\xi^{2p}\ind{\eta\in[n-1,n)}}\\
&\le C_p\sum_{n=1}^{\infty}\exp\set{-2K_{2p}'(n-1)^{1/\mu}}\exp\set{K_{2p}'n^{1/\mu}}<\infty,
\end{align*}
as required.
\end{proof}
An improtant particular example of equation \eqref{mainsde} is an equation involving fractional Brownian motion. Recall that an $l$-dimensional fractional Brownian motion $H\in(0,1)$ is a centered Gaussian process $B^H = \set{B_t^H = (B_t^{H,1},\dots,B_t^{H,l}),t\in[0,T]}$ with the covariance function
$$
\ex{B^{H,i}_tB^{H,j}_s} = \frac{\delta_{ij}}{2}\left(t^{2H}+s^{2H}-\abs{t-s}^{2H}\right).
$$
It is well known that a fractional Brownian motion has a version which satisfies the H\"older condition with any exponent  $\mu<H$. We assume henceforth that this version is taken.

\begin{corollary}
Let in equation \eqref{mainsde} the process $Z=B^H$ be a fractional  Brownian motion with a parameter $H\in(1/2,1)$. 
Then for any $p>0$
$$
\ex{\norm{X}_{0,T,\infty}^p}<\infty.$$
\end{corollary}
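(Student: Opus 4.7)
The corollary is a direct application of Theorem~\ref{thm-integr}, so the plan reduces to verifying its hypotheses when $Z = B^H$. First, I would fix some $\mu \in (1/2, H)$ and work on the full-measure event on which $B^H$ is $\mu$-H\"older continuous on $[0,T]$. Assumptions A1--A4 concern the coefficients $a,b,c$, not the driver, so they are assumed to hold for the given equation and need no further check; the only nontrivial thing is the exponential moment bound
$$
\ex{\exp\set{c \norm{B^H}_{0,T,\mu}^{1/\mu}}} < \infty \quad \text{for every } c > 0.
$$

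The key observation is that $\norm{B^H}_{0,T,\mu}$ is a measurable seminorm of the centered Gaussian vector $B^H$ taking values in the separable Banach space $C^\mu_0([0,T];\R^l)$ of $\mu$-H\"older functions vanishing at $0$ (or, equivalently, in a separable subspace such as a slightly better H\"older class, which carries $B^H$ almost surely). By Fernique's theorem applied to this Gaussian measure, there exists $\lambda_0 > 0$ with
$$
\ex{\exp\set{\lambda_0 \norm{B^H}_{0,T,\mu}^{2}}} < \infty.
$$
Since $\mu > 1/2$, the exponent $1/\mu$ satisfies $1/\mu < 2$, hence Young's inequality yields, for each $c > 0$ and each $\lambda \in (0,\lambda_0)$, a constant $C_{c,\lambda}$ such that
$$
c\, x^{1/\mu} \le \lambda\, x^{2} + C_{c,\lambda} \quad\text{for all } x \ge 0.
$$
Substituting $x = \norm{B^H}_{0,T,\mu}$ and taking expectations gives the required exponential integrability for \emph{every} $c > 0$.

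With this hypothesis verified, Theorem~\ref{thm-integr} directly implies $\ex{\norm{X}_{0,T,\infty}^p} < \infty$ for every $p > 0$. The only subtle point in the argument is to justify the applicability of Fernique's theorem, which requires stating $B^H$ as a Gaussian random variable in a separable Banach space on which $\norm{\cdot}_{0,T,\mu}$ is a measurable seminorm; this is a standard construction (using, e.g., the space $C^{\mu'}_0$ for some $\mu' \in (\mu,H)$, in which $B^H$ lives almost surely and whose norm dominates the $\mu$-H\"older seminorm). No other step is delicate: the passage from $x^2$ to $x^{1/\mu}$ in the exponent is trivial because $1/\mu < 2$, which is precisely what makes the corollary effortless once Theorem~\ref{thm-integr} is in hand.
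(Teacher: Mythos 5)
Your proposal is correct and follows essentially the same route as the paper: fix $\mu\in(1/2,H)$, apply Fernique's theorem to obtain a square-exponential moment of the H\"older seminorm $\norm{B^H}_{0,T,\mu}$, use $1/\mu<2$ to upgrade this to the hypothesis of Theorem~\ref{thm-integr} for every $c>0$, and conclude. Your extra care in framing Fernique's theorem (separable Banach space, measurable seminorm) is a welcome refinement of the paper's terser justification, but the argument is the same.
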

\begin{proof}
Let $\mu\in (1/2,H)$. 
Then the H\"older seminorm
$$
\norm{B^H}_{0,T,\mu} = \sup_{0\le s<t\le T}\frac{\abs{B^H_t-B^H_s}}{(t-s)^{\mu}}
$$
is an almost surely finite supremum of norms of a centered Gaussian family. Therefore, by Fernique's theorem,
$
\ex{\exp\set{a\norm{B^H}_{0,T,\mu}^2}}<\infty
$
for some $a>0$. Since $\mu>1/2$, then
$
\ex{\exp\set{c\norm{B^H}_{0,T,\mu}^{1/\mu}}}<\infty
$
for all $c>0$. Thus, the required statement follows from Theorem~\ref{thm-integr}.
\end{proof}

The exponential integrability will be proved under a different set of assumptions. Some assumptions are carried forward unchanged, nevertheless we repeat them for convenience. 
\begin{enumerate}[{B}1.]
\item
For all $t\in[0,T]$, $x\in \R^d$,
\begin{gather*}
\abs{a(t,x)} + \abs{b(t,x)}+ \abs{c(t,x)}\leq C.
\end{gather*}
\item For all $t\in[0,T]$, $x\in \R^d$,
$$\abs{c'_x(t,x)}\le C.$$
\item For all $R>0$, $t\in[0,T]$ and $x_1,x_2\in\R^d$ such that $\abs{x_1}\le R$,  $\abs{x_2}\le R$,
$$\abs{a(t,x_1)-a(t,x_2)}+\big|{b(t,x_1)-b(t,x_2)}\big|+\abs{c'_x(t,x_1)-c'_x(t,x_2)}\leq C_R\abs{x_1-x_2}.$$
\item For all  $s,t\in[0,T]$, $x\in\R^d$,
$$\abs{c(t,x)-c(s,x)}+ \abs{c'_x(t,x)-c'_x(s,x)}\leq C|s-t|^\beta.$$
\end{enumerate}

Under these assumption the exponential integrability of the solution to \eqref{mainsde} is proved the same way as it is made in \cite{mbfbm-malliavin} for coefficients independent of $t$. Nevertheless, for completeness we will give principal ideas, omitting unimportant details.
\begin{theorem}\label{thm-expintegr}
Assume that the assumptions {\rm B1--B4} are satisfied and for any
$c>0$, $\alpha\in(0,2)$
$$
\ex{\exp\set{c \norm{Z}^{\alpha}_{0,T,\mu}}}<\infty.
$$
Then for any $c>0$, $\gamma\in(0,4\mu/(2\mu+1))$ the solution $X$ to equation \eqref{mainsde} satisfies
$$
\ex{\exp\set{c\norm{X}_{0,T,\infty}^\gamma}}<\infty.
$$
\end{theorem}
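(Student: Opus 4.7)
The plan is to reuse the localization-and-chaining scheme of Theorem~\ref{thm-integr}, trading the linear growth of $a,b,c$ for the boundedness B1 so that each per-piece estimate becomes polynomial in $N$ rather than exponential. Fix $\theta\in(1-\mu,\beta]$ with $\beta\in(1-\mu,1/2)$ chosen as close to $1/2$ as B4 allows, and keep the stopping time $\tau_{N,R}$ and truncated process $X^{N,R}$. Redoing the Young/It\^o estimates under B1--B4, the prefactor $1+\norm{X^{N,R}}_{\cdot,\infty}$ that appeared throughout the previous proof drops out because $|a|,|b|,|c|,|c'_x|$ are now uniformly bounded; in particular
$$\abs{I^c_v-I^c_u}\le CN\bigl(1+\norm{X^{N,R}}_{s,t,\theta}(v-u)^\theta\bigr)(v-u)^\mu,$$
so on every subinterval $[s,t]$ of length at most $\Delta:=(2CN)^{-1/\mu}$ one arrives at
$$\norm{X^{N,R}}_{s,t,\theta}\le 2\norb{I^b}_{s,t,\theta}+CN,$$
a bound linear in the Brownian seminorm alone and independent of $\norm{X^{N,R}}_{\cdot,\infty}$.

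Partitioning $[0,T]$ into $n=\lceil T/\Delta\rceil\asymp N^{1/\mu}$ subintervals $[s_k,s_{k+1}]$ of length $\le\Delta$ and telescoping through $|X_t-X_{s_k}|\le\Delta^\theta\norm{X^{N,R}}_{s_k,s_{k+1},\theta}$, I would then obtain
$$\norm{X^{N,R}}_{0,T,\infty}\le\abs{X_0}+CTN^{1+(1-\theta)/\mu}+2\Delta^\theta\sum_{k=0}^{n-1}\norb{I^b}_{s_k,s_{k+1},\theta}.$$
Letting $\theta\uparrow 1/2$ sends the deterministic exponent to $(2\mu+1)/(2\mu)$. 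For the random term, GRR together with BDG applied piece-by-piece to the bounded integrand $b$ yields $\ex{\norb{I^b}_{s_k,s_{k+1},\theta}^p}\le (Cp)^{p/2}\Delta^{p(1/2-\theta)}$, and the martingale structure of $I^b$ combined with Bernstein-type concentration for conditionally sub-Gaussian martingale increments makes $\Delta^\theta\sum_k\norb{I^b}_{s_k,s_{k+1},\theta}$ sub-Gaussian with variance proxy $O(T)$, \emph{uniformly in} $N$. Consequently, for any $\gamma\in(0,4\mu/(2\mu+1))$ one can pick $\epsilon>0$ and $\theta$ close enough to $1/2$ so that
$$\ex{\exp\bigl(c\norm{X^{N,R}}_{0,T,\infty}^\gamma\bigr)}\le C\exp(KN^\alpha),\qquad\alpha:=\tfrac{\gamma(2\mu+1)}{2\mu}+\epsilon<2,$$
the sub-Gaussian Brownian contribution being harmless because $\gamma<2$.

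Letting $R\to\infty$ and invoking Fatou gives $\ex{\exp(c\norm{X}_{0,T,\infty}^\gamma)\ind{\norm{Z}_{0,T,\mu}\le N}}\le C\exp(KN^\alpha)$, after which the argument concludes exactly as at the end of Theorem~\ref{thm-integr}: with $\xi=\exp(c\norm{X}^\gamma)$ and $\eta=\norm{Z}_{0,T,\mu}$, Cauchy--Schwarz gives
$$\ex{\xi}\le\sum_{n\ge 1}\bigl(\ex{\xi^2\ind{\eta\le n}}\bigr)^{1/2}\mathsf{P}(\eta\ge n-1)^{1/2},$$
which is summable since the hypothesis yields $\mathsf{P}(\eta\ge n)\le C\exp(-c'n^{\alpha'})$ for every $\alpha'<2$ while $\alpha<2$. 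The main obstacle I foresee is the uniform-in-$N$ concentration for $\Delta^\theta\sum_k\norb{I^b}_{s_k,s_{k+1},\theta}$: the partition itself depends on the random $N=\norm{Z}_{0,T,\mu}$, and the H\"older-seminorm summands are only conditionally sub-Gaussian, so the martingale structure of $I^b$ together with the localization $\norm{Z}\le N$ (which freezes the partition for each fixed $N$) must be exploited before $N$ is integrated out at the very last step.
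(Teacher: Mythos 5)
There is a genuine gap, and it sits exactly at the step you flag least: the limit $\theta\uparrow 1/2$. In your scheme the only H\"older exponent in play is $\theta$, and you need $\theta\le\beta$ because the estimate of $\norm{c(\cdot,X_\cdot)}_{u,v,\theta}$ uses the time-regularity $\abs{c(t,x)-c(s,x)}\le C\abs{t-s}^\beta$ of assumption B4. But $\beta$ is a fixed datum of the coefficient $c$, constrained only to lie in $(1-\mu,1/2)$; it may be arbitrarily close to $1-\mu$, and you are not free to "choose it as close to $1/2$ as B4 allows". With $\theta\le\beta$ bounded away from $1/2$, your deterministic exponent $1+(1-\theta)/\mu$ stays strictly above $(2\mu+1)/(2\mu)$, and your argument only yields $\gamma<2\mu/(\mu+1-\beta)$, which is strictly smaller than the claimed range $4\mu/(2\mu+1)$. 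The paper's proof avoids this by carrying \emph{two} exponents: $\theta\in(1-\mu,\beta]$ for the Young integral (where $\beta$ is binding) and a second, free exponent $\kappa\in(\theta,1/2)$ attached only to the It\^o integral via $\abs{I^b_v-I^b_u}\le\norb{I^b}_{s,t,\kappa}(v-u)^\kappa$. After telescoping, the pathwise bound is $\norm{X}_{0,T,\infty}\le C\bigl(1+\norb{I^b}_{0,T,\kappa}(1+\norm{Z}_{0,T,\mu})^{(1-\kappa)/\mu}+(1+\norm{Z}_{0,T,\mu})^{1/\mu}\bigr)$; the product term is split by Young's inequality, and it is $\kappa\uparrow1/2$ (always permissible) that produces the threshold $4\mu/(2\mu+1)$, with $\ex{\exp\set{c\norb{I^b}_{0,T,\kappa}^\alpha}}<\infty$ for $\alpha<2$ quoted from Lemma~1 of the Malliavin-regularity paper. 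Note also that the paper needs no stopping time here at all: it works pathwise with the random step $\Delta=(2K(1+\norm{Z}_{0,T,\mu}))^{-1/\mu}$.

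Two further points on your bookkeeping. First, the deterministic contribution per piece is $2K(1+\norm{Z})(t-s)^{\mu}$ after multiplying the $\theta$-seminorm bound by $\Delta^\theta$; you replaced $N\Delta^{\mu-\theta}$ by $N$ before summing, which is what inflated the exponent to $1+(1-\theta)/\mu$ in the first place — kept correctly, that term sums to $CN^{1/\mu}$ with no dependence on $\theta$, so the pressure on $\theta$ disappears and the whole derivation of your exponent $\alpha=\gamma(2\mu+1)/(2\mu)+\epsilon$ has to be redone. Second, the concentration claim for $\Delta^\theta\sum_k\norb{I^b}_{s_k,s_{k+1},\theta}$ is stated incorrectly: the summands are nonnegative with conditional mean of order $\Delta^{1/2}$, so the sum has mean of order $T\Delta^{-1/2}\sim N^{1/(2\mu)}$ and cannot be sub-Gaussian with variance proxy $O(T)$ uniformly in $N$; only the centered sum is. This particular defect is repairable (the exponent $\gamma/(2\mu)$ is harmless), and if the conditional sub-Gaussianity of the per-piece H\"older seminorms were carefully established, your summation idea would actually be sharper than the paper's crude bound $\norb{I^b}_{s_k,s_{k+1},\kappa}\le\norb{I^b}_{0,T,\kappa}$ followed by Young's inequality. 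But as written, the proposal both misstates that step and, more importantly, rests on a choice of $\theta$ that the hypotheses do not permit, so it does not prove the theorem in the stated generality.
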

\begin{proof}
The proof partially repeats that of Theorem~\ref{thm-integr}, so some details will be left out.

Denote $I^a_t = \int_0^t a(s,X_s)ds$, $I^b_t = \int_0^t b(s,X_s)dW_s$, $I^c_t = \int_0^t c(s,X_s)dZ_s$ and fix arbitrary $\theta\in(1-\mu,\beta]$, $\kappa\in(\theta,1/2)$.

Let $0\le s\le u\le v\le t\le T$. Write
\begin{align*}
\abs{X_v-X_u}&\le \abs{I^a_v-I^a_u} +\abs{I^b_v-I^b_u}+ \abs{I^c_v-I^c_u}.
\end{align*}
Estimate
$$
\abs{I^a_v-I^a_u} \le \int_u^v \abs{a(z,X_z)} dz \le C(v-u).
$$
Further, using \eqref{integrestim}, we have
\begin{align*}
\abs{I^c_v-I^c_u}
\le C \norm{Z}_{0,t,\mu}\left(1+  \norm{X}_{s,t,\theta} (v-u)^\theta \right)(v-u)^\mu.
\end{align*}
Evidently, $\abs{I^b_v-I^b_u}\le \norb{I^b}_{s,t,\kappa}(v-u)^{\kappa}$. The estimates above yield
\begin{align*}
\norm{X}_{s,t,\theta}&\le C(t-s)^{1-\theta}
+ \norb{I^b}_{s,t,\kappa}(t-s)^{\kappa-\theta}  + C\norm{Z}_{0,t,\mu}\left((t-s)^{\mu-\theta}  + \norm{X}_{s,t,\theta} (t-s)^\theta \right) \\&\le \norb{I^b}_{s,t,\kappa}(t-s)^{\kappa-\theta} + K (1+\norm{Z}_{0,t,\mu})\left((t-s)^{\mu-\theta} + \norm{X}_{s,t,\theta} (t-s)^\mu \right)
\end{align*}
with a positive constant $K$. Assuming that $t-s\le\Delta  := (2K  (1+\norm{Z}_{0,T,\mu}))^{-1/\mu}$, we have
\begin{equation}\label{xsttheta}
\norm{X}_{s,t,\theta}\le 2\norb{I^b}_{s,t,\kappa}(t-s)^{\kappa-\theta} + 2K (1+\norm{Z}_{0,t,\mu})(t-s)^{\mu-\theta}.
\end{equation}
As in the proof of Theorem~\ref{thm-integr}, the last estimate implies
\begin{align*}
\norm{X}_{0,t,\infty}&\le \norm{X}_{0,s,\infty} +  2 \norb{I^b}_{s,t,\kappa}(t-s)^\kappa + 2K
(1+\norm{Z}_{0,t,\mu})(t-s)^{\mu}\\
& \le \norm{X}_{0,s,\infty} +  2K\big(\norb{I^b}_{s,t,\kappa}\Delta^\kappa +
(1+\norm{Z}_{0,t,\mu})\Delta^{\mu}\big).
\end{align*}
Splitting the segment  $[0,T]$ into  $[T/\Delta]+1$ parts of length at most $\Delta$, we get
\begin{align*}
\norm{X}_{0,T,\infty} &\le \abs{X_0} + 2K(T+1)\left(\norb{I^b}_{0,T,\kappa}\Delta^{\kappa-1} + 2K
(1+\norm{Z}_{0,T,\mu})\Delta^{\mu-1}\right)\\
& \le  C\left(1 + \norb{I^b}_{0,T,\kappa}(1+\norm{Z}_{0,T,\mu})^{(1-\kappa)/\mu} + (1+\norm{Z}_{0,T,\mu})^{1/\mu}\right)
\end{align*}
Now take arbitrary $\gamma\in(0,4\mu/(2\mu+1))$.  Since
$(2-\gamma)\mu > \gamma/2$, then it is possible to choose  $\kappa$ so that $1-\kappa\in(1/2,(2-\gamma)\mu/\gamma)$, equivalently,
$$
\frac{2(1-\kappa)}{(2-\gamma)\mu}<\frac2\gamma.
$$
Now take any $\lambda>2/(2-\gamma)$ so that $\nu:=\lambda(1-\kappa)/\mu<2/\gamma$, and denote $\lambda'=\lambda/(1-\lambda)$ the adjoint exponent for $\lambda$; from $\lambda>2/(2-\gamma)$ it follows that $\lambda'<2/\gamma$. From the Young inequality
\begin{align*}
\norb{I^b}_{0,T,\kappa}(1+\norm{Z}_{0,T,\mu})^{(1-\kappa)/\mu} \le
\frac{1}{\lambda'}\norb{I^b}_{0,T,\kappa}^{\lambda'} + \frac{1}{\lambda}(1+\norm{Z}_{0,T,\mu})^{\nu}.
\end{align*}
Therefore,
\begin{align*}
\norm{X}_{0,T,\infty}^\gamma &\le  C\left(1 + \norb{I^b}_{0,T,\kappa}^{\lambda'} + (1+\norm{Z}_{0,t,\mu})^{\nu} + (1+\norm{Z}_{0,T,\mu})^{1/\mu}\right)^\gamma\\
&\le C\left(1 + \norb{I^b}_{0,T,\kappa}^{\lambda'\gamma} + (1+\norm{Z}_{0,T,\mu})^{\nu\gamma} + (1+\norm{Z}_{0,T,\mu})^{\gamma/\mu}\right).
\end{align*}
Hence the statement of the theorem follows, because the exponents are less than $2$ and for $\alpha\in(0,2)$ $\ex{\exp\set{c\norm{Z}_{0,T,\mu}^\alpha}}<\infty$ by the assumption, $\ex{\exp\set{c\norm{I^b}_{0,T,\kappa}^\alpha}}<\infty$ by \cite[Lemma 1]{mbfbm-malliavin}.
\end{proof}
\begin{corollary}
Let in \eqref{mainsde} the process $Z$ is a fractional Brownian motion $B^H$ with the Hurst parameter $H\in(1/2,1)$, and the coefficients satisfy the assumptions {\rm B1--B4}. 
Then for all $c>0$, $\gamma\in(0,4H/(2H+1))$ the solution $X$ to equation \eqref{mainsde} satisfies
$$
\ex{\exp\set{c\norm{X}_{0,T,\infty}^\gamma}}<\infty.
$$
\end{corollary}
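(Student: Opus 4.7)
The plan is to reduce this corollary to Theorem~\ref{thm-expintegr} by verifying its driver hypothesis for an appropriately chosen H\"older exponent $\mu$.

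First, I would note that $H\mapsto 4H/(2H+1)$ is continuous and strictly increasing on $(1/2,1)$. Given $\gamma<4H/(2H+1)$, I can therefore pick $\mu\in(1/2,H)$ so close to $H$ that $\gamma<4\mu/(2\mu+1)$ still holds, and fix the version of $B^H$ whose sample paths are $\mu$-H\"older continuous.

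The core step is to establish the Gaussian-type exponential bound
$$
\ex{\exp\set{c\norm{B^H}_{0,T,\mu}^\alpha}}<\infty \qquad \text{for all } c>0,\ \alpha\in(0,2),
$$
which is precisely the driver hypothesis of Theorem~\ref{thm-expintegr}. This parallels the argument in the previous corollary: the seminorm $\norm{B^H}_{0,T,\mu}$ is an almost surely finite supremum of Gaussian linear functionals of $B^H$, so Fernique's theorem supplies some $a>0$ with $\ex{\exp\set{a\norm{B^H}_{0,T,\mu}^2}}<\infty$. A standard comparison then upgrades this: for any fixed $c>0$ and $\alpha\in(0,2)$, $cx^\alpha-ax^2\to-\infty$ as $x\to\infty$, so $cx^\alpha\le C_{c,\alpha}+ax^2$ uniformly in $x\ge 0$, and monotonicity of $\exp$ yields the bound.

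With this in hand, all hypotheses of Theorem~\ref{thm-expintegr} are verified (B1--B4 are assumed, and the driver enjoys exponential integrability of every order $\alpha\in(0,2)$). Since $\gamma<4\mu/(2\mu+1)$ by construction, the theorem delivers $\ex{\exp\set{c\norm{X}_{0,T,\infty}^\gamma}}<\infty$ for every $c>0$, as required. No real obstacle arises; the only subtlety is the monotonicity argument used to select $\mu$, which is immediate from the formula for $4H/(2H+1)$.
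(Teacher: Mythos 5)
Your proof is correct and follows exactly the route the paper intends: the paper leaves this corollary without an explicit proof, but the argument is the same as for the first corollary (pick $\mu\in(1/2,H)$ with $\gamma<4\mu/(2\mu+1)$, use Fernique's theorem to get $\ex{\exp\set{a\norm{B^H}_{0,T,\mu}^2}}<\infty$, upgrade to all $c>0$ and $\alpha\in(0,2)$ by the elementary comparison $cx^\alpha\le C_{c,\alpha}+ax^2$, and invoke Theorem~\ref{thm-expintegr}). Your explicit handling of the choice of $\mu$ via monotonicity of $4H/(2H+1)$ is a detail the paper glosses over, and it is handled correctly.
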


\section{Integrability of solution for equations without linear growth condition}
Consider now equation of a form
$$
Y_t = Y_0 + \int_0^t \tilde{a}(s,X_s,Y_s)ds + \sum_{i=1}^r\int_0^t  \tilde{b}_i(s,X_s,Y_s)d\wt W^{i}_t + \sum_{j=1}^q  \int_0^t \tilde{c}_j(s,X_s,Y_s)d\wt Z^{j}_s,\quad t\in[0,T],
$$
where $X$ solves \eqref{mainsde}; the coefficients $\tilde{a}\colon [0,T]\times \R^d\times \R^k\to\R^k$, $\tilde b_i\colon [0,T]\times \R^d\times \R^k\to\R^k$, $i=1,\dots,r$, $\tilde c_j\colon  [0,T]\times \R^d\times \R^k\to\R^k$, $j=1,\dots,q$, are jointly continuous; $\wt W=\set{\wt W_t=\big(\wt W_t^{1},\dots,\wt W_t^{r}\big),t\in[0,T]}$ is a standard Wiener process in $\R^r$, $\wt Z = \set{\wt Z_t=\big(\wt Z_t^{1},\dots,\wt Z_t^{q}\big),t\in[0,T]}$ is an $\mathbb{F}$-adapted process in $\R^q$ having $\mu$-H\"older continuous paths; the initial condition $Y_0$ is non-random. Abbreviate this equation as
\begin{equation}\label{mainsde2}
Y_t = Y_0 + \int_0^t \tilde{a}(s,X_s,Y_s)ds + \int_0^t  \tilde{b}(s,X_s,Y_s)d\wt W_t +  \int_0^t \tilde{c}(s,X_s,Y_s)d\wt Z_s.
\end{equation}
Such equations arise in modeling quite often. For instance, in financial mathematics, a price process in a stochastic volatility model can be driven by an equation
\begin{equation}\label{stochvol}
S_t = S_0 + \int_0^t \mu_s S_u du + \int_0^t \sigma^W_u S_u dW_u + \int_0^t \sigma^B_u S_u dB^H_u,
\end{equation}
where the stochastic volatility processes  $\sigma^W$ and $\sigma^B$ are also solutions to some stochastic differential equations. Another example is the equation satisfied by the Malliavin derivative of the solution to \eqref{mainsde}:
\begin{equation}\label{stochder}
d DX_t = a'(X_t)DX_t dt + b'(X_t)DX_t dW_t + c'(X_t)DX_t dZ_t.
\end{equation}
If we combine equation \eqref{stochvol}  with volatility equations or equation \eqref{stochder} with $\eqref{mainsde}$, then the coefficients of resulting multi-dimensional equation, generally speaking, will not satisfy the linear growth condition. So we need some other techniques to study the integrability.

In out case the role of  `stochastic volatility' is played by the solution $X$ to \eqref{mainsde}. We will assume that the coefficients to \eqref{mainsde} satisfy the assumptions B1--B4.
We formulate the assumptions on the coefficients \eqref{mainsde2}, using \eqref{stochvol} and \eqref{stochder} as model equations. Specifically, we will assume that for some $\rho\in[0,2/3)$
\begin{enumerate}[C1.]
\item For all $t\in[0,T]$, $x\in\R^d$, $y\in\R^k$,
$$
\abs{\tilde{a}(t,x,y)} + \abs{\tilde{c}(t,x,y)} \le C(1+\abs{x}^\rho)(1+\abs{y}).
$$
\item For all $t\in[0,T]$, $x\in\R^d$, $y\in\R^k$ 
$$
 \big|{\tilde{b}(t,x,y)}\big| \le C(1+\abs{y}).
$$
\item
For all $t\in[0,T]$, $x\in\R^d$, $y\in\R^k$ 
$$
\abs{\tilde{c}'_y(t,x,y)}\le C(1+\abs{x}^\rho).
$$
\item For all $R>1$, $t\in[0,T]$ and $x\in\R^d$, $y_1,y_2\in\R^k$ such that $\abs{x}\le R$, $\abs{y_1}\le R$, $\abs{y_2}\le R$,
\begin{align*}
&\abs{\tilde{a}(t,x,y_1)-\tilde{a}(t,x,y_2)} + \big|{\tilde{b}(t,x,y_1)-\tilde{b}(t,x,y_2)}\big|  \\&\qquad\qquad + \abs{\tilde{c}'_y(t,x,y_1)- \tilde{c}'_y(t,x,y_2)}\le C\abs{y_1-y_2}.
\end{align*}
\item For all  $t\in[0,T]$, $x_1,x_2\in\R^d$, $y\in\R^k$,
\begin{align*}
&\abs{\tilde{c}(t,x_1,y)-\tilde{c}(t,x_2,y)} \le C\abs{x_1-x_2}(1+\abs{y}).
\end{align*}
\item For all $s,t\in[0,T]$, $x\in\R^d$, $y\in\R^k$,
\begin{align*}
&\abs{\tilde{c}(s,x,y)-\tilde{c}(t,x,y)}\le 
C\abs{s-t}^\beta(1+\abs{x}^\rho)(1+\abs{y}),\\
&\abs{\tilde{c}'_y(s,x,y)-\tilde{c}'_y(t,x,y)}\le C\abs{s-t}^\beta(1+\abs{y}).
\end{align*}
\end{enumerate}
Unfortunately, we were not able to prove existence of \textit{all} moments under the assumption $|\tilde b(t,x,y)|\le C(1+\abs{x}^\rho)(1+\abs{y})$, so we impose C2. 

The proof of the unique solvability for the equation \eqref{mainsde2} under assumptions C1--C6 is similar to that for equation \eqref{mainsde} (see \cite{mbfbm-delay}), so we omit it.

\begin{theorem}\label{thm-last}
Assume that the coefficients of equation \eqref{mainsde} satisfy  {\rm B1--B4}, and the coefficients of \eqref{mainsde2} satisfy {\rm C1--C6} with  $\rho\in(0, 2\mu(2\mu-1)/(2\mu+1))$. Let also for any $c>0$, $\alpha\in(0,2)$
$$
\ex{\exp\set{c \norm{Z}^{\alpha}_{0,T,\mu}}}+ \ex{\exp\set{c \norb{\wt Z}^{\alpha}_{0,T,\mu}}}<\infty.
$$
Then for any $p>0$ the solution $Y$ to \eqref{mainsde2} satisfies
$$
\ex{\norm{Y}_{0,T,\infty}^p}<\infty.
$$
\end{theorem}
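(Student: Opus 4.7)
The plan is to follow the strategy of Theorem~\ref{thm-integr}---truncate, derive a pathwise recursion for the $\theta$-H\"older norm on short subintervals, iterate, and deintegrate---while carefully tracking the new $X$-dependence of the coefficients in \eqref{mainsde2} through the factor $1+\abs{x}^\rho$. The key new effect is that the admissible step size $\Delta$ must now shrink with an additional random factor $M := 1+\norm{X}_{0,T,\infty}^\rho$, whose exponential moments (up to the critical exponent $4\mu/((2\mu+1)\rho)$ prescribed by Theorem~\ref{thm-expintegr} under \textrm{B1--B4}) will be what determines the allowable range of $\rho$.

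I would first set up the truncation $\tau_{N,R}=\min\set{t\ge 0:\norm{Y}_{0,t,\infty}\ge R\text{ or }\norb{\wt Z}_{0,t,\mu}\ge N}$, $Y^{N,R}_t=Y_{t\wedge\tau_{N,R}}$, and fix $\theta\in(1-\mu,\beta]$. On a subinterval $[s,t]$, assumptions C1, C3, C5, C6 combined with the Young--Love inequality \eqref{integrestim} yield an estimate whose leading-order self-term in $\norm{Y^{N,R}}_{s,t,\theta}$ has coefficient $KNM(t-s)^\mu$, so setting $\Delta=(4KNM)^{-1/\mu}$ absorbs it to give a recursion
$$
\norm{Y^{N,R}}_{s,t,\theta}\le 2\norb{I^{\tilde b}}_{s,t,\theta}+2KNM\big(1+\norm{Y^{N,R}}_{s,t,\infty}(t-s)^{\mu-\theta}\big)
$$
plus additional summands from the $X$-H\"older increments in C5, which will enter only as polynomial corrections (via $\norm{X}_{0,T,\theta}$). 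The It\^o integral $I^{\tilde b}$ is controlled by the Garsia--Rodemich--Rumsey inequality exactly as in the proof of Theorem~\ref{thm-integr}, since C2 is linear in $y$ and independent of $x$. Conditioning on $\sigma(X)$ (so that $M$ and $\norm{X}_{0,T,\theta}$ become constants) and iterating over $[T/\Delta]+1 \sim (NM)^{1/\mu}$ subintervals, then letting $R\to\infty$ via Fatou, yields
$$
\Ex{\norm{Y}_{0,T,\infty}^p\ind{\norb{\wt Z}_{0,T,\mu}\le N}\,\Big|\,\sigma(X)} \le C_p\exp\set{c_p(NM)^{1/\mu}}P(X),
$$
where $P(X)$ is polynomial in $\norm{X}_{0,T,\theta}$ (which, having exponential moments of any order $<2$, has all polynomial moments).

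For the deintegration, the Cauchy--Schwarz argument with the dyadic splitting $\set{\norb{\wt Z}_{0,T,\mu}\in[n-1,n)}$ of Theorem~\ref{thm-integr} reduces the problem to the finiteness of $\ex{\exp\set{c\norb{\wt Z}_{0,T,\mu}^{1/\mu}M^{1/\mu}}}$ for every $c>0$. Applying Young's inequality $\eta^{1/\mu}M^{1/\mu}\le \eta^{r/\mu}/r+M^{r'/\mu}/r'$ with conjugate exponents $r,r'$ and then H\"older's inequality on the expectation, this splits into $\ex{\exp\set{c\norb{\wt Z}_{0,T,\mu}^{r/\mu}}}<\infty$ (which requires $r<2\mu$, by the theorem's exponential-moment hypothesis on $\wt Z$) and $\ex{\exp\set{cM^{r'/\mu}}}<\infty$ (which by Theorem~\ref{thm-expintegr} requires $\rho r'/\mu<4\mu/(2\mu+1)$). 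Letting $r\uparrow 2\mu$ and $r'\downarrow 2\mu/(2\mu-1)$, the threshold becomes precisely $\rho<2\mu(2\mu-1)/(2\mu+1)$, matching the assumption. The main obstacle is the pathwise bookkeeping in step two: one has to verify that, even though several sources of $X$-dependence enter the H\"older seminorm of $\tilde c(\cdot,X_\cdot,Y^{N,R}_\cdot)$ (the direct factor $\abs{X}^\rho$ from C1/C3, the $X$-H\"older contribution from C5, and the time-H\"older factor from C6), only the benign combination $M=1+\norm{X}_{0,T,\infty}^\rho$ enters the critical step-size restriction $\Delta\sim(NM)^{-1/\mu}$ and hence drives the exponential growth through the iteration, while the other $X$-norms contribute only sub-leading polynomial factors.
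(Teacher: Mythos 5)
Your overall scheme (truncation, local H\"older recursion, Garsia--Rodemich--Rumsey for the It\^o term, iteration over subintervals of length $\Delta$, Fatou, Cauchy--Schwarz deintegration, and the final Young-inequality arithmetic producing the threshold $\rho<2\mu(2\mu-1)/(2\mu+1)$ from the conjugate exponents $2\mu$ and $2\mu/(2\mu-1)$) matches the paper's, and your last step is essentially identical to the paper's. However, there are two genuine gaps in the middle.

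First, the conditioning on $\sigma(X)$ is not justified. You condition so that $M=1+\norm{X}_{0,T,\infty}^{\rho}$ and $\norm{X}_{0,T,\theta}$ become constants, and then apply the Garsia--Rodemich--Rumsey/BDG argument to $I^{\tilde b}=\int \tilde b(s,X_s,Y_s)d\wt W_s$. But $\wt W$ is not assumed independent of $X$; in the motivating examples \eqref{stochvol} and \eqref{stochder} one has $\wt W=W$, the very Brownian motion driving $X$, so conditionally on $\sigma(X)$ the process $\wt W$ is no longer a Brownian motion and the moment bound for $I^{\tilde b}$ fails. The paper avoids this entirely by putting the event $\set{\norm{X}^{\rho}_{0,t,\infty}\ge M}$ into the stopping time $\tau$ (alongside the bounds on $\norm{Z}_{0,t,\mu}$ and on $\norb{J^b}_{0,t,\kappa}$ with $J^b_t=\int_0^t b(s,X_s)dW_s$), so that everything stays adapted, $M$ and $N$ are deterministic throughout the iteration, and the randomness of $M$ is only deintegrated at the very end via a two-parameter version of the Cauchy--Schwarz/Young argument.

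Second, your claim that the $X$-H\"older increments coming from C5 ``enter only as polynomial corrections'' is wrong. The relevant term in the $\theta$-seminorm of $\xi_t=\tilde c(t,X_t,Y_t)$ is $\abs{X_x-X_y}\left(1+\abs{Y_x}\right)$, which after applying \eqref{integrestim} produces a summand of the form $CN\norm{X}_{s,t,\theta}\big(1+\norm{Y}_{s,t,\infty}\big)(t-s)^{\mu}$ in the recursion for $\norm{Y}_{s,t,\theta}$. Because it multiplies $\norm{Y}_{s,t,\infty}$, it is a self-referential Gronwall term that must be absorbed into the step size $\Delta$, and therefore contributes \emph{exponentially} through the iteration count $T/\Delta$, not as a polynomial prefactor $P(X)$. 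The paper's key device here is the local estimate $\norm{X^{\tau}}_{s,t,\theta}\le CN(t-s)^{\kappa-\theta}$ (inherited from \eqref{xsttheta}, and the reason $\norb{J^b}_{0,t,\kappa}$ and $\norm{Z}_{0,t,\mu}$ are included in $\tau$), which upgrades this summand to $CN^{2}\big(1+\norm{Y^{\tau}}_{s,t,\infty}\big)(t-s)^{\mu+\kappa-\theta}$ and yields the extra step-size constraint $\Delta\le (8KN^{2})^{-1/(\mu+\kappa)}$ and the extra factor $\exp\set{CN^{2/(\mu+\kappa)}}$ in the final bound; this factor is harmless precisely because $2/(\mu+\kappa)<2$. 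Without this step your bookkeeping does not close, so you need to add the refined local H\"older control of $X$ (and the corresponding enlargement of the truncation) to make the argument work.
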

\begin{remark}
The restriction $\rho< 2\mu(2\mu-1)/(2\mu+1)$ explains why  $\rho<2/3$ in C1--C6: the right-hand side of the former inequality increases in $\mu$ and is equal to $2/3$ for $\mu=1$.
\end{remark}
\begin{proof}
The proof will follow the same scheme as the proofs of Theorem~\ref{thm-integr} and \ref{thm-expintegr}.

Put $J^b_t = \int_0^t b(s,X_s)dW_s$. Fix arbitrary $N\ge 1$, $M\ge 1$, $R\ge 1$,   $\theta\in(1-\mu, \beta]$, $\kappa\in(\theta,1/2)$.  Denote $$\tau = \min\set{t\ge 0: \norm{Y}_{0,t,\infty}\ge R\text{ or } \norm{Z}_{0,t,\mu}+\norb{J^b}_{0,t,\kappa}\ge N\text{ or }\norm{X}^\rho_{0,t,\infty}\ge M},$$
$X^\tau_t = X_{t\wedge \tau}$, $Y^{\tau}_t = Y_{t\wedge \tau}$, $\ind{t} = \ind{\set{t\le \tau}}$. Put also
$I^a_t = \int_0^t \tilde{a}(s,X^{\tau}_s,Y^{\tau}_s)\ind{s}ds$, $I^b_t = \int_0^t \tilde{b}(s,X^{\tau}_s,Y^{\tau}_s)\ind{s}d\wt W_s$, $I^c_t = \int_0^t \tilde{c}(s,X^{\tau}_s,Y^{\tau}_s)\ind{s}d\wt Z_s$.

Let $0\le s\le u\le v\le t\le T$. Write
\begin{align*}
\abs{Y^{\tau}_v-Y^{\tau}_u}&\le \abs{I^a_v-I^a_u} +\big|{I^b_v-I^b_u}\big|+ \abs{I^c_v-I^c_u}. \end{align*}
From the condition C1
\begin{align*}
\abs{I^a_v-I^a_u} &\le \int_u^v \abs{\tilde{a}(z,X^{\tau}_z,Y^{\tau}_z)}\ind{z} dz \le CM\left(1+\norm{Y^{\tau}}_{s,t,\infty}\right)(v-u).
\end{align*}
Denoting $\xi_t = \tilde c(t,X^{\tau}_t,Y^{\tau}_t)$, we have from \eqref{integrestim} that
\begin{align*} \abs{I^c_v-I^c_u} \le C N  \left(
\norm{\xi}_{u,v,\infty} + 
\norm{\xi}_{u,v,\theta}(v-u)^\theta\right)(v-u)^\mu.
\end{align*}
The assumption C1 allows to estimate
 $$\norm{\xi}_{u,v,\infty}\le C  M\left(1+\norm{Y^{\tau}}_{s,t,\infty}\right).$$ 
The inequalities
\begin{align*}
&\abs{\tilde{c}(x,X^{\tau}_x,Y^{\tau}_x)-\tilde{c}(y,X^{\tau}_x,Y^{\tau}_x)}\le \abs{\tilde{c}(x,X^{\tau}_x,Y^{\tau}_x)-\tilde{c}(y,X^{\tau}_x,Y^{\tau}_x)} \\
& \qquad\qquad +
\abs{\tilde{c}(y,X^{\tau}_x,Y^{\tau}_x)-\tilde{c}(y,X^{\tau}_y,Y^{\tau}_x)} + \abs{\tilde{c}(y,X^{\tau}_y,Y^{\tau}_x)-\tilde{c}(y,X^{\tau}_y,Y^{\tau}_y)} \\
&\le C\left(\abs{x-y}^\beta\left(1+\abs{X^{\tau}_x}^\rho\right)\left(1+\abs{Y^{\tau}_x}\right) + \abs{X^{\tau}_x - X^{\tau}_y}\left(1+\abs{Y^{\tau}_x}\right)+\abs{Y^{\tau}_x - Y^{\tau}_y}\left(1+\abs{X^{\tau}_y}^\rho\right) \right)
\end{align*}
imply that
\begin{align*}
&\norm{\xi}_{u,v,\theta}\le CM\big((v-u)^{\beta-\theta}\big(1+\norm{Y^{\tau}}_{u,v,\infty}\big)
 + \norm{Y^{\tau}}_{u,v,\theta}\big)+  \norm{X^\tau}_{u,v,\theta}\big(1+\norm{Y^{\tau}}_{u,v,\infty}\big).
\end{align*}
From these inequalities, we obtain
\begin{align*}
&\norm{Y^{\tau}}_{s,t,\theta}\le CM\big(1+\norm{Y^{\tau}}_{s,t,\infty}\big)(t-s)^{1-\theta}
+ \norb{I^b}_{s,t,\theta}  + CMN \big(1 + \norm{Y^{\tau}}_{s,t,\infty}\big)  (t-s)^{\mu-\theta} \\&\qquad
+ CN\Big(M\big((t-s)^{\beta-\theta}\big(1+ \norm{Y^{\tau}}_{s,t,\infty}\big) + \norm{Y^{\tau}}_{s,t,\theta}\big)+  \norm{X^\tau}_{s,t,\theta}\big(1+\norm{Y^{\tau}}_{s,t,\infty}\big)\Big)(t-s)^{\mu}\\
& \le \norb{I^b}_{s,t,\theta} + CMN \left(\big(1+\norm{Y^{\tau}}_{s,t,\infty}\big)(t-s)^{\mu-\theta} + \norm{Y^{\tau}}_{s,t,\theta}(t-s)^{\mu}\right) \\&\qquad
+ CN \norm{X^\tau}_{s,t,\theta}\big(1+\norm{Y^{\tau}}_{s,t,\infty}\big)(t-s)^{\mu}.
\end{align*}
Hence we get, similarly to \eqref{xsttheta}, 
$$
\norm{X^\tau}_{s,t,\theta}\le CN(t-s)^{\kappa-\theta} + C N(t-s)^{\mu-\theta}\le CN(t-s)^{\kappa-\theta}.
$$
Consequently, 
\begin{align*}\norm{Y^{\tau}}_{s,t,\theta} &\le \norb{I^b}_{s,t,\theta} + KMN \left(\big(1+\norm{Y^{\tau}}_{s,t,\infty}\big)(t-s)^{\mu-\theta} + \norm{Y^{\tau}}_{s,t,\theta}(t-s)^{\mu}\right) \\&\qquad\qquad
+ KN^2 \big(1+\norm{Y^{\tau}}_{s,t,\infty}\big)(t-s)^{\mu+\kappa-\theta}
\end{align*}
with some constant $K$.

Assume that $t-s\le\Delta$ with $\Delta \le (2K M N)^{-1/\mu}$. Then
\begin{align*}
\norm{Y^{\tau}}_{s,t,\theta}&\le 2\norb{I^b}_{s,t,\theta} + 2KMN \big(1+\norm{Y^{\tau}}_{s,t,\infty}\big)(t-s)^{\mu-\theta} \\
&\qquad\qquad + 2KN^2 \big(1+\norm{Y^{\tau}}_{s,t,\infty}\big)(t-s)^{\mu+\kappa-\theta}.
\end{align*}
Hence, as in Theorems~\ref{thm-integr} and \ref{thm-expintegr}, we have
\begin{align*}
\norm{Y^{\tau}}_{s,t,\infty}&\le 2\norb{I^b}_{s,t,\theta}\Delta^\theta + \norm{Y^{\tau}}_{0,s,\infty} +  2KNM\big(1+\norm{Y^{\tau}}_{s,t,\infty}\big)\Delta^{\mu}\\
&\qquad\qquad + 2KN^2 \big(1+\norm{Y^{\tau}}_{s,t,\infty}\big)\Delta^{\mu+\kappa}.
\end{align*}
for $t-s\le \Delta$. Assume further that $\Delta\le \min\set{(8K  N M)^{-1/\mu}, (8KN^2)^{-1/(\mu+\kappa)}}$; then
$$
\norm{Y^{\tau}}_{s,t,\infty}\le 2\norm{Y^{\tau}}_{0,s,\infty}  + 4\norb{I^b}_{s,t,\theta}.
$$
Therefore, for arbitrary $p>(1/2-\theta)^{-1}$
\begin{equation*}
\ex{\norm{Y^{\tau}}_{0,t,\infty}}\le C_p \left(\ex{\norm{X^{\tau}}_{0,s,\infty}^p} + \ex{\norb{I_b}_{s,t,\theta}^p}\Delta^{p\theta}+N^p\Delta^{p\theta}\right).
\end{equation*}
Using the same reasoning as in the proof of Theorem~\ref{thm-integr}, we arrive at the estimate
\begin{align*}
\ex{\norm{Y^{\tau}}_{0,t,\infty}^p}\le  K_p\left(\ex{\norm{Y^{\tau}}_{0,s,\infty}^p} + \ex{\norm{Y^{\tau}}_{0,t,\infty}^p}\Delta^{p/2}+N^p\right),
\end{align*}
with some positive constant $K_{p}$. Putting $$\Delta = \min\set{(8K  N M)^{-1/\mu}, (8KN^2)^{-1/(\mu+\kappa)}, (2K_{p})^{-2/p}},$$ we get
\begin{equation*}
\ex{\norm{Y^{\tau}}_{0,t,\infty}^p} \le 2K_{p} \left(\ex{\norm{Y^{\tau}}_{0,s,\infty}^p} +N^p\right),
\end{equation*}
therefore,
\begin{align*}
\ex{\norm{Y^{\tau}}_{0,T,\infty}^p} \le (2K_{p}+1)^{T/\Delta + 1} \left(\abs{Y_0}^p +N^p\right)\le 
C_p\exp\set{C_p \big(N^{1/\mu}M^{1/\mu}+N^{2/(\mu+\kappa)}\big)}.
\end{align*}
As in the proof of Theorem~\ref{thm-integr}, we derive hence that
$$
\left(\ex{\norm{Y}_{0,T,\infty}^p}
\right)^2 
\le C_p \exp\set{C_p \left(\xi^{1/\mu}\eta^{1/\mu} + \xi^{2/(\mu+\kappa)}, \right)}
$$
where $\xi = \norm{Z}_{0,t,\mu}+\norm{J_b}_{0,t,\kappa}$, $\eta = \norm{X}_{0,T,\infty}^\rho$. Since $\mu+\kappa>1$, then for any $c>0$ \ $\ex{\exp\set{c\xi^{2/(\mu+\kappa)}}}<\infty$. From the restriction on $\rho$ it follows that $2\mu(2\mu-1)^{-1}< 4\mu^2\rho^{-1}(2\mu+1)^{-1}$. Choose arbitrary $\lambda\in (2\mu(2\mu-1)^{-1}, 4\mu^2\rho^{-1}(2\mu+1)^{-1})$, denote $\lambda' = \lambda/(\lambda-1)$ the exponent adjoint to $\lambda$ and write by the Young inequality
$$
\xi^{1/\mu}\eta^{1/\mu}\le C(\xi^{\lambda'/\mu} + \eta^{\lambda/\mu}).
$$
Theorem~\ref{thm-expintegr} implies that $\ex{\exp\set{c\eta^{\lambda/\mu}}}<\infty$ for any $c>0$. It is easy to see that  $\lambda'<2\mu$, so $\ex{\exp\set{c\xi^{\lambda'/\mu}}}<\infty$ for any $c>0$. Thus, the theorem is proved. \end{proof}
\begin{corollary}
Let in equations \eqref{mainsde} and \eqref{mainsde2} the processes  $Z$ and $\wt Z$ be fractional Brownian motionswith thte Hurst parameter $H\in(1/2,1)$, and let the coefficients of the equations satisfy assumptions {\rm B1--B4} and {\rm C1--C6} with $\rho\in(0,2H(2H-1)/(2H+1))$, correspondingly.  Then for any $p>0$ the solution $Y$ to equation \eqref{mainsde2} satisfies
$$
\ex{\norm{Y}_{0,T,\infty}^p}<\infty.
$$
\end{corollary}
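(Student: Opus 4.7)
The plan is to derive this corollary as a direct consequence of Theorem~\ref{thm-last} by verifying its two non-trivial hypotheses in the fractional Brownian case, exactly mimicking the corollary that follows Theorem~\ref{thm-integr}. No new pathwise analysis should be needed; the work reduces to (i) selecting a suitable H\"older exponent $\mu$ and (ii) a standard Gaussian tail estimate.

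For (i), I would note that the function $f(\mu):=2\mu(2\mu-1)/(2\mu+1)$ is continuous and strictly increasing on $(1/2,1)$, with $f(1/2)=0$ and $f(H)=2H(2H-1)/(2H+1)$. Since the hypothesis gives $\rho<f(H)$, I can pick $\mu\in(1/2,H)$ sufficiently close to $H$ so that $\rho<f(\mu)$ still holds; fixing $\mu$-H\"older continuous versions of $B^H$ and $\wt B^H$, which exist because $\mu<H$, gives admissible drivers for Theorem~\ref{thm-last}. For (ii), the seminorm $\norm{B^H}_{0,T,\mu}$ is an a.s.\ finite supremum of a centered Gaussian family, so Fernique's theorem yields some $a>0$ with $\ex{\exp\set{a\norm{B^H}_{0,T,\mu}^2}}<\infty$. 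Since for any $c>0$ and $\alpha\in(0,2)$ one has $c x^\alpha \le a x^2 + C_{a,c,\alpha}$ on $[0,\infty)$, this upgrades to $\ex{\exp\set{c\norm{B^H}_{0,T,\mu}^\alpha}}<\infty$ for all such $c$ and $\alpha$, with the identical estimate for $\wt B^H$.

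Combining these facts with the assumed B1--B4 for \eqref{mainsde} and C1--C6 for \eqref{mainsde2}, Theorem~\ref{thm-last} applies with the chosen $\mu$ and yields $\ex{\norm{Y}_{0,T,\infty}^p}<\infty$ for every $p>0$. I do not expect any real obstacle: the only point worth checking carefully is the continuity and strict monotonicity of $f$ near $H$, which is a routine one-variable calculus computation, together with the strict inequality $\mu<H$, which is necessary to have $\mu$-H\"older regularity of both fractional Brownian motions.
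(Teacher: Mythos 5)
Your proposal is correct and is exactly the argument the paper intends (the paper omits the proof of this corollary, but it parallels the proved corollary to Theorem~\ref{thm-integr}): pick $\mu\in(1/2,H)$ close enough to $H$ that $\rho<2\mu(2\mu-1)/(2\mu+1)$ still holds, which is legitimate since that expression is continuous and increasing in $\mu$, and use Fernique's theorem to verify the exponential-moment hypothesis for both $\norm{B^H}_{0,T,\mu}$ and $\norb{\wt B^H}_{0,T,\mu}$ with any $\alpha<2$. The one genuinely non-trivial point --- that the strict inequality $\mu<H$ forces the continuity/monotonicity argument for the admissible range of $\rho$ --- is handled correctly.
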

\begin{remark}
The last corollary allows to deduce that the solution to \eqref{mainsde} with $Z=B^H$ has an integrable Malliavin derivative provided that the coefficients are differentiable, the derivative of $b$ is bounded, and the derivatives of $a$ and $c$ grow slower than a  power function with an exponent less than $(0,2H(2H-1)/(2H+1))$.
\end{remark}


\begin{thebibliography}{10}

\bibitem{friz-victoir}
P.~K. Friz, N.~B. Victoir.
\newblock {\em Multidimensional stochastic processes as rough paths}, volume
  120 of {\em Cambridge Studies in Advanced Mathematics}. Theory and applications.
\newblock Cambridge University Press, Cambridge, 2010.
\newblock xiv+656 p.


\bibitem{guerra-nualart}
J.~Guerra, D.~Nualart.
\newblock Stochastic differential equations driven by fractional {B}rownian
  motion and standard {B}rownian motion.
\newblock {\em Stoch. Anal. Appl.}, 26(5), 2008, p.~289--315.

\bibitem{hu-nualart}
Y.~Hu, D.~Nualart.
\newblock Differential equations driven by {H}\"older continuous functions of
  order greater than 1/2.
\newblock In {\em Stochastic analysis and applications}, volume~2 of {\em Abel
  Symp.},  Springer, Berlin, 2007, p.~399--413.

\bibitem{kubilius}
K.~Kubilius.
\newblock The existence and uniqueness of the solution of an integral equation
  driven by a {$p$}-semimartingale of special type.
\newblock {\em Stochastic Process. Appl.}, 98(2), 2002, p.~289--315.

\bibitem{bookmyus}
Y.~S. Mishura.
\newblock {\em Stochastic calculus for fractional {B}rownian motion and related
  processes}, volume 1929 of {\em Lecture Notes in Mathematics}.
\newblock Springer-Verlag, Berlin, 2008. 
\newblock xviii+393 p.

\bibitem{mbfbm-sde}
Y.~S. Mishura and G.~M. Shevchenko.
\newblock Stochastic differential equation involving {W}iener process and
  fractional {B}rownian motion with {H}urst index {$H> 1/2$}.
\newblock {\em Comm. Statist. Theory Methods}, 40(19--20), 2011, p.~3492--3508.

\bibitem{mbfbm-limit}
Y.~S. Mishura and G.~S. Shevchenko.
\newblock Mixed stochastic differential equations with long-range dependence:
  {E}xistence, uniqueness and convergence of solutions.
\newblock {\em Comput. Math. Appl.}, 64(10), 2012, p.~3217--3227.

\bibitem{mbfbm-jumps}
G.~M. Shevchenko.
\newblock Mixed fractional stochastic differential equations with jumps.
\newblock {\em Stochastics}, 2013.
\newblock {A}rticle in press.

\bibitem{mbfbm-delay}
G.~M. Shevchenko.
\newblock Mixed stochastic delay differential equations.
\newblock To appear in {\em Theory Probab. Math.
Stat.}, 2013.
\newblock arXiv:math.PR/1306.0590.

\bibitem{mbfbm-malliavin}
G.~M. Shevchenko and T.~O. Shalaiko.
\newblock Malliavin regularity of solutions to mixed stochastic differential
  equations.
\newblock {\em Stat. Probab. Letters}, 83(12), 2013, p.~2638-2646.
\end{thebibliography}
\end{document}